\documentclass[11pt,reqno]{amsart}

\usepackage{amsmath}
\usepackage{amsthm}
\usepackage{amssymb}
\usepackage{amsfonts}
\usepackage{latexsym}
\usepackage{hyperref}
\usepackage{cite}
\usepackage{xcolor}
\usepackage{thmtools}

\usepackage{todonotes}
\usepackage{comment}
\usepackage{multirow}
\usepackage{fullpage}
\usepackage{bbm}
\usepackage{algpseudocode}

\newcommand{\imod}[1]{\,(\textnormal{mod }#1)\,}



\newtheorem{theorem}{Theorem}[section]

\newtheorem{lemma}[theorem]{Lemma}

\newtheorem{proposition}[theorem]{Proposition}

\theoremstyle{definition}

\newtheorem*{remark}{Remark}

\newcommand{\logb}[1]{\log{\left(#1\right)}}

\title{On the sum of a prime and a number that is not square-free}

\author[E.~S.~Lee]{Ethan~Simpson~Lee}
\address{University of the West of England, School of Computing and Creative Technologies, Coldharbour Lane, Bristol, BS16 1QY} 
\email{ethan.lee@uwe.ac.uk}
\urladdr{\url{https://sites.google.com/view/ethansleemath/home}}

\author[R.~O'Clarey]{Rowan~O'Clarey}
\address{Universität Hamburg, Department of Mathematics, Bundesstrasse 55, 20146 Hamburg} 
\email{rowan.o.clarey@uni-hamburg.de}

\begin{document}

\begin{abstract}
We prove that every sufficiently large integer $n$ can be written as the sum of a prime and an integer that is not square-free. In addition, we expect this result holds for every $n > 24$ and prove two results to support this claim. First, we prove the result holds unconditionally for every odd $n > 24$. Second, assuming the Generalised Riemann Hypothesis for Dirichlet $L$-functions, we prove the result holds for every $n > 24$. We also discuss the obstruction which prohibits us from proving the result unconditionally for every $n > 24$.
\end{abstract}

\maketitle

\section{Introduction}

Let $g(n)$ denote the number of representations of $n$ as $p+q$, where $p$ and $q$ are primes such that $p\geq q$. The Goldbach conjecture is equivalent to the assertion that the trivial lower bound $g(n) \geq 0$ cannot hold with equality for any even $n > 2$, and Deshouillers, Granville, Narkiewicz, and Pomerance proved the trivial upper bound for $g(n)$ cannot hold with equality for any $n > 210$ in \cite{DeshouillersGranvilleNarkiewiczPomerance}. In this paper, we investigate a variation of the latter result.

In particular, let $T(n)$ denote the number of representations of $n$ as $p + s$, where $p$ is prime and $s$ is a positive square-free integer. Dudek \cite[Thm.~1]{Dudek} proved that the trivial lower bound $T(n) \geq 0$ cannot hold with equality for any $n > 2$. On the other hand, \autoref{thm:main3_suff_large} below proves the trivial upper bound for $T(n)$ cannot hold with equality when $n$ is sufficiently large. To prove \autoref{thm:main3_suff_large}, similar techniques to those in Dudek's paper on the complementary problem are employed, except one has to carefully bound the main term which vanishes for values of $n$ with many prime divisors. 

\begin{theorem}\label{thm:main3_suff_large}
Every sufficiently large integer $n$ can be written as the sum of a prime and an integer that is not square-free. 
\end{theorem}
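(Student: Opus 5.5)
We want a prime $p<n$ with $n-p$ not square-free, i.e. $d^2 \mid n-p$ for some $d\ge 2$. We will count such primes using only small square moduli. Let $N(n)$ denote the number of primes $p\le n-1$ such that $n-p$ is divisible by $q^2$ for some prime $q\nmid n$. For a fixed prime $q \nmid n$, the primes $p\equiv n \imod{q^2}$ with $p<n$ number $\pi(n;q^2,n)$. Since $\gcd(n,q^2)=1$, the Siegel--Walfisz theorem (or Page/Brun--Titchmarsh for uniformity) gives
\[
\pi(n;q^2,n) = \frac{\mathrm{li}(n)}{q(q-1)}\bigl(1+o(1)\bigr)
\]
uniformly for $q\le (\log n)^A$. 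If $q \mid n$, the only prime in that class is $p=q$, which contributes nothing useful, so those $q$ must be excluded. This exclusion is exactly where the main term can vanish when $n$ has many small prime factors.

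We therefore choose a set $Q$ of primes $q\nmid n$ with $q\le Y$, where $Y=(\log n)^{1/2}$ or similar. By Bonferroni's inequality,
\[
N(n)\ \ge\ \sum_{q\in Q}\pi(n;q^2,n)\;-\;\sum_{\substack{q<q'\\ q,q'\in Q}}\pi(n;q^2q'^2,n).
\]
Rather than using inclusion--exclusion, the simplest route is to take a \emph{single} prime $q$: let $q$ be the least prime not dividing $n$. Since $\prod_{r\le x} r = e^{(1+o(1))x}$ and this product must divide $n$ whenever every prime up to $x$ divides $n$, we get $q \le (1+o(1))\log n$. Thus $q^2 \le 2(\log n)^2$, which lies well inside the Siegel--Walfisz range, and
\[
\pi(n;q^2,n) \gg \frac{n}{q^2\log n} \gg \frac{n}{(\log n)^3}.
\]
The number of primes $p<n$ in the class $n \bmod q^2$ is therefore positive for large $n$. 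Any such $p$ gives $q^2\mid n-p$ with $n-p\ge 1$. We also need $n-p>0$ and $n-p$ not square-free, which holds because $n-p$ is a positive multiple of $q^2$. Note that $p\equiv n\not\equiv 0 \imod q$, so the class is coprime and the count is genuinely asymptotic.

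The main obstacle is uniformity. Siegel--Walfisz gives the asymptotic uniformly for moduli up to $(\log n)^A$, with an ineffective constant, so the argument yields ``sufficiently large'' without an explicit threshold. This matches the statement of the theorem: an explicit version would need an effective bound such as the Brun--Titchmarsh lower-bound analogues or a zero-free region with possible Siegel zeros. The exceptional-zero issue is precisely what prevents an effective result for all $n>24$, and it is why the paper resorts to GRH or to odd $n$. For odd $n$ one can take $q=2$ and modulus $4$ explicitly. So I would present the proof as: (i) bound the least prime $q\nmid n$ by $\ll\log n$; (ii) apply Siegel--Walfisz to the coprime class $n \bmod q^2$; (iii) conclude that the count $\gg n/(\log n)^3>0$.
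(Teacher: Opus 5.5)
Your argument is correct, but it takes a genuinely different and much shorter route than the paper. The paper proves the stronger inequality $R(n)<\theta(n)$ by expanding $\mu^2(n-p)=\sum_{a^2\mid n-p}\mu(a)$ over all $a\le\sqrt n$. \autoref{prop:condition} approximates $\sum_{1<a\le\sqrt n}\mu(a)\theta(n,a^2,n)$ by $n(P(n)-1)$. It does so using Siegel--Walfisz for $a\le(\log n)^2$, Brun--Titchmarsh for $(\log n)^2<a\le n^{1/4}$, and trivial bounds beyond. Then \autoref{prop:IntegralBounds}, via Robin's bound on $\omega(n)$ and explicit prime sums, is needed to show $1-P(n)\gg 1/(\log n\log\log n)$. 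That lower bound keeps the main term from being swamped when $n$ has many small prime factors.

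You avoid all of this by using a single square modulus. Every prime below the least prime $q\nmid n$ divides $n$, so $q\ll\log n$. One application of Siegel--Walfisz to the coprime class of $n$ modulo $q^2$ then gives $\gg n/(\log n)^3$ primes $p<n$ with $q^2\mid n-p$. This is the same mechanism the paper uses only in \autoref{lem:condition}, for the range $8\cdot 10^9<n<q_{20}\#$ under GRH.

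What each route buys:
\begin{itemize}
\item The paper's route gives a quantitative lower bound, of order $nW(n)$, for the $\log$-weighted number of representations. It also gives a reusable criterion that feeds the explicit \autoref{thm:main3_odd} and \autoref{thm:main3}.
\item Your route gives brevity. Because it needs only one prime in one progression of modulus $\ll(\log n)^2$, you could replace Siegel--Walfisz by Linnik's theorem (the least prime in a coprime class modulo $k$ is $\ll k^{L}$, with effectively computable constants). That yields an effective threshold, though probably not a practical explicit one. So your closing claim that the exceptional zero forces ineffectivity describes the paper's method, not your own.
\end{itemize}

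Two small corrections. Brun--Titchmarsh gives only upper bounds, so it cannot stand in for Siegel--Walfisz in your lower-bound step. The Bonferroni detour is unnecessary.
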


We expect \autoref{thm:main3_suff_large} to hold for every integer $n>24$ and prove two results to support this claim. First, \autoref{thm:main3_odd} below is an explicit analogue of \autoref{thm:main3_suff_large} for odd $n$. An unconditional analogue for all $n>24$ appears out of reach with the present method; this limitation is too technical to describe satisfactorily here, but it is described in detail in Section \ref{sec:main3_sufflarge}.

\begin{theorem}\label{thm:main3_odd}
Every odd integer $n > 24$ can be written as the sum of a prime and an integer that is not square-free. 
\end{theorem}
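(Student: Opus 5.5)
For odd $n$ we look for a prime $p$ with $n-p$ divisible by $4$. Since $n$ is odd, any odd prime $p<n$ makes $n-p$ even. So it suffices to find a prime $p<n$ with $p\equiv n \pmod 4$ and $n-p\ge 4$; then $4\mid n-p$ and $n-p>0$, so $n-p$ is not square-free. If $n\equiv 1\pmod 4$ we need a prime $p\equiv 1\pmod 4$ with $p\le n-4$. If $n\equiv 3\pmod 4$ we need a prime $p\equiv 3 \pmod 4$ with $p\le n-4$.

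\emph{Case $n\equiv 3 \pmod 4$.} Take $p=3$. Then $n-3\equiv 0\pmod 4$ and $n-3\ge 4$ as soon as $n\ge 7$, so all such $n>24$ are handled at once.

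\emph{Case $n\equiv 1\pmod 4$.} Take $p=5$. Then $n-5\equiv 0 \pmod 4$ and $n-5\geq 4$ for $n\ge 9$, so this case is immediate as well.

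The argument is therefore elementary: choose $p\in\{3,5\}$ according to $n \bmod 4$, so that $4\mid n-p$. There is no genuine obstacle. The only checks are that $n-p$ is positive, which holds for all odd $n\ge 9$, and that the statement claims nothing below $25$. The restriction $n>24$ in the theorem comes from the even case (e.g.\ $n=24$ fails), not from the odd case. If one wanted extra margin, the same trick works with $9\mid n-p$ using a prime $p\equiv n\pmod 9$, but this is unnecessary.
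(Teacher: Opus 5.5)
Your argument is correct, and it takes a genuinely different and far more elementary route than the paper.

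The paper proves this theorem with its general analytic machinery:
\begin{itemize}
\item a computer check of $24<n\le 8\cdot 10^9$ (\autoref{lem:comp_checks_nsf});
\item the criterion of \autoref{prop:condition}, built from Brun--Titchmarsh and Ramar\'e's tail bound;
\item the specialisation to odd $n$ through $P(n)\le \tfrac12$ from \autoref{prop:IntegralBounds};
\item Bennett \emph{et al.}'s explicit bounds for primes in progressions modulo $a^2\le 10^5$, with $c=316$ and $A=0.34843$, to verify \eqref{eqn:equv_cond_C-2} for all odd $n>8\cdot 10^9$.
\end{itemize}

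You instead note that one of the fixed primes $3$ or $5$ is congruent to $n$ modulo $4$, so $4\mid n-p$ with $n-p\ge 4$. This uses no computation and no prime-counting estimates, and it covers every odd $n\ge 7$. The factor $\tfrac12$ in $P(n)\le\tfrac12$ is the analytic counterpart of your observation. It is the local factor at $2$, reflecting that half of the primes satisfy $p\equiv n \pmod 4$ when $n$ is odd.

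What the paper's approach buys is a quantitative statement: a lower bound of order $(1-P(n))n$ for the $\log$-weighted number of representations. It also gives a framework that is uniform in $n$. That framework is only really needed for $n\equiv 0\pmod 4$, since $n\equiv 2\pmod 4$ is handled just as trivially by $p=2$. For the existence statement here, your argument suffices and shows the odd case is elementary.

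Your closing aside about $9\mid n-p$ would also need $3\nmid n$, but you do not use it.
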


A natural approach to overcome the limitation that prohibits us from proving \autoref{thm:main3_odd} for even $n > 24$ is to assume the Generalised Riemann Hypothesis (GRH) for Dirichlet $L$-functions. Under this assumption, we prove \autoref{thm:main3} below, which is a conditional explicit analogue of \autoref{thm:main3_suff_large} for every $n > 24$. 
This result conditionally confirms our expectation and proves that the trivial upper bound for $T(n)$ cannot hold with equality for any $n > 24$. 

\begin{theorem}\label{thm:main3}
Suppose the GRH for Dirichlet $L$-functions is true. Every integer $n > 24$ can be written as the sum of a prime and an integer that is not square-free. 
\end{theorem}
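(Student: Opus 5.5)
Under GRH we aim to show that for every $n>24$ there is a prime $p<n$ such that $n-p$ is divisible by $q^2$ for some prime $q$. Equivalently, we want a prime $q$ with $q^2<n$ and a prime $p\equiv n \imod{q^2}$, $p<n$, $p\ne n$. For this to be possible we need $\gcd(n,q^2)=1$, i.e.\ $q\nmid n$. (If $q\mid n$ we would need $q^2\mid n-p$ with $p=q$, which is a separate easy special case.) So the plan has three steps: choose a small prime $q\nmid n$; use an explicit GRH bound for primes in the progression $n \bmod q^2$ up to $x=n-1$ to guarantee a prime there; and handle small $n$ by direct computation.

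\textbf{Step 1: choice of modulus.} Let $q$ be the least prime not dividing $n$. Since the product of the primes below $q$ divides $n$, we get $q\ll\log n$, so $q^2\ll(\log n)^2$. This is exactly where GRH is needed. Unconditionally, Siegel zeros for the modulus $q^2$ prevent an explicit range, and for even $n$ the modulus cannot be $4$ because $2\mid n$, so we are pushed to $q\ge3$ with no control. For odd $n$ one can take $q=2$ and modulus $4$, which is why \autoref{thm:main3_odd} is unconditional. Under GRH we instead use an explicit Chebyshev bound of the form
\[
\left|\theta(x;k,a)-\frac{x}{\varphi(k)}\right|\le c\sqrt{x}\,\log^2 x \qquad (\gcd(a,k)=1,\ x\ge 2),
\]
for example from an explicit version due to Lamzouri--Li--Soundararajan or Grenié--Molteni, valid uniformly in $k$.

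\textbf{Step 2: the counting argument.} Take $k=q^2$, $a\equiv n$, and $x=n-1$. The primes $p\le n-1$ with $p\equiv n \imod{q^2}$ all satisfy $n-p\ge1$ and $q^2\mid n-p$, so $n-p$ is not square-free. It therefore suffices to have $\theta(n-1;q^2,n)>0$, which follows once
\[
\frac{n-1}{q(q-1)}>c\sqrt{n}\,\log^2 n.
\]
Using $q\le C\log n$ explicitly (via $\prod_{p<q}p\le n$ and an explicit lower bound for $\theta$), this holds for all $n\ge N_0$ with an explicit $N_0$. I expect $N_0$ to be perhaps $10^{10}$--$10^{20}$, depending on the constant $c$.

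\textbf{Step 3: small $n$ (main obstacle).} The inequality above with $\log^4 n$ losses gives a large $N_0$, so the main difficulty is closing the gap down to $25$. I would first sharpen the choice of $q$: rather than the least prime not dividing $n$, split by the residue of $n$ modulo $6$ or $30$, so that the modulus is $4$, $9$ or $25$ in most cases. Small moduli give much better explicit GRH constants, and in some cases unconditional results (Bennett--Martin--O'Bryant--Rechnitzer) already cover large ranges. This should bring $N_0$ down substantially. The remaining $25\le n<N_0$ would be handled computationally, for instance by checking whether $n-p$ is non-square-free for one of the few primes $p$ just below $n$, or by checking whether one of $n-2$, $n-3$, $n-5$, $n-7,\dots$ is divisible by $4$, $9$ or $25$. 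A simple sieve over the range does this efficiently. Finally, the exceptions at $n\le24$ confirm that $24$ is sharp.
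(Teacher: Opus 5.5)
Your argument is correct. For large $n$ it takes a different and simpler route than the paper.

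The paper splits into three ranges:
\begin{itemize}
\item the computation of \autoref{lem:comp_checks_nsf} for $24<n\le 8\cdot10^9$;
\item \autoref{lem:condition}, i.e.\ the GRH least-prime bound of Lamzouri--Li--Soundararajan for the modulus $q^2$, on $8\cdot10^9<n<q_{20}\#$, using that some prime $q\le 71$ does not divide $n$;
\item for $n\ge q_{20}\#$, the sieve criterion of \autoref{prop:condition}. This expands $\mu^2(n-p)$ over all $a\le\sqrt n$, uses \eqref{eqn:theta_GRH} for $a\le n^A$ and Brun--Titchmarsh beyond, and combines this with the bound $P(n)<1-W(n)+W(n)^2/2$, which rests on Robin's estimate for $\omega(n)$.
\end{itemize}

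You instead use a single modulus $q^2$ throughout, with $q$ the least prime not dividing $n$. That is exactly the mechanism of \autoref{lem:condition}. Your observation that $n\ge\prod_{p<q}p$ grows exponentially in $q$, while the GRH requirement is only polynomial in $q$, is what lets it run past $q_{20}\#$; for example, $4(q(q-1)\log q)^2<\prod_{p<q}p$ for every prime $q\ge 73$. So the whole analysis of $P(n)$ is not needed for \autoref{thm:main3}.

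Your Step 3 is more pessimistic than necessary. Apply the Step 2 inequality separately for each $q$ using $n\ge\prod_{p<q}p$, rather than the uniform bound $q\le C\log n$, or simply invoke the least-prime corollary directly as the paper does. Then the analytic part already covers all $n>8\cdot10^9$. With \eqref{eqn:theta_GRH}, the case $q\le 31$ works from $8\cdot10^9$ on, and $q\ge 37$ forces $n\ge 31\#$, where there is ample room. One finite check like \autoref{lem:comp_checks_nsf} therefore finishes the proof. The splitting modulo $6$ or $30$ and the appeal to Bennett \emph{et al.}\ are unnecessary, and even your crude uniform version gives a threshold of roughly $10^{11}$, which is still within computational reach.

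What the paper's heavier framework buys is quantitative information: the weighted count $\theta(n)-R(n)$ is roughly $(1-P(n))n\gg n/(\log n\log\log n)$. A single progression only certifies $\gg n/(\log n)^2$. For the bare existence statement this makes no difference.
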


Hardy and Littlewood's Conjecture H (see \cite{HardyLittlewood}) postulates that every sufficiently large integer is either a square or the sum of a prime and a square. Since the set of squares is a subset of the set of positive integers that are not square-free, our results also mark a step toward this conjecture.

The remainder of this manuscript is devoted to proving \autoref{thm:main3_suff_large}, \autoref{thm:main3_odd}, and \autoref{thm:main3}; detailed proofs are given in Section \ref{sec:main_theorems}. Along the way, we require two technical ingredients, namely \autoref{prop:condition} and \autoref{prop:IntegralBounds}. Since their proofs are comparatively long and would interrupt the main line of argument, we defer them to Section \ref{sec:prop:condition} and Section \ref{sec:prop:IntegralBounds}, respectively. 

\subsection*{Acknowledgements} 

ESL thanks the Heilbronn Institute for Mathematical Research for their support. In addition, we thank Daniel Johnston, Chris Keyes, Pieter Moree, Timothy Trudgian, anonymous referees, and other colleagues for valuable feedback and insightful discussions.

\section{Proof of Main Theorems}\label{sec:main_theorems}

To begin, we prove \autoref{lem:comp_checks_nsf} below using computations. This result unconditionally verifies \autoref{thm:main3_odd} and \autoref{thm:main3} for every $n \in (24,8\cdot 10^9]$. Hence it suffices to prove \autoref{thm:main3_odd} and \autoref{thm:main3} for $n > 8\cdot 10^9$. The boundary $8\cdot 10^9$ is chosen, because later we apply certain bounds for the prime number theorem in arithmetic progressions which hold in the range $n\geq 8\cdot 10^9$.

\begin{lemma}\label{lem:comp_checks_nsf}
If $24 < n \leq 8\cdot 10^9$, then $n$ can be written as the sum of a prime and a positive integer that is not square-free.
\end{lemma}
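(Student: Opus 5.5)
The plan is a direct computer verification. It is organised so that almost every $n$ is dispatched by a cheap test, and only a few exceptional $n$ need more work. The key observation is that $n$ is representable as soon as some prime $p<n$ makes $n-p$ divisible by $4$, $9$, $25$, or some other square of a prime. For $n$ of either parity, we look at the prime residues modulo $4$: if $n$ is odd, a prime $p\equiv n \imod{4}$ with $p<n$ gives $4\mid n-p$, and such a prime exists as soon as $n\geq 7$. For $n$ odd this is immediate, since $n\equiv 1$ or $3 \imod 4$ and we can take $p=5$ or $p=3$ respectively (for $n>24$, $n-p\geq 4$ is positive). So every odd $n>24$ is trivially handled, with no computation at all.

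For even $n$, subtracting an odd prime gives an odd number, so the square $4$ is useless except via $p=2$. First test whether $4\mid n-2$, i.e.\ $n\equiv 2\imod 4$; this settles half the even cases. For $n\equiv 0 \imod 4$ we use odd prime squares $q^2$. If $n\not\equiv 0 \imod 3$, then $n \equiv p \imod 9$ has a prime solution $p$ among small primes in the appropriate residue class mod $9$ (coprime to $3$). Explicitly, choose $p\in\{2,7,11,13,5,17,\dots\}$ with $p\equiv n \imod 9$ and $p<n$. If $3\mid n$, try $p=3$ when $9\mid n-3$. Otherwise move on to $25$, $49$, and so on. The algorithm for each even $n\equiv 0\imod 4$ is: loop over odd primes $q=3,5,7,\dots$ and search for a prime $p<n$ with $p\equiv n\imod{q^2}$. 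One candidate search is simply to scan $p=n-kq^2$ for $k=1,2,\dots$ and test primality (a sieve up to $8\cdot 10^9$, or Miller--Rabin, which is deterministic in this range with fixed bases).

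The implementation then amounts to iterating over $n\equiv 0\imod 4$ up to $8\cdot 10^9$ (about $2\cdot 10^9$ values) and, for each, scanning $n-9k$, $k\geq 1$, until a prime is found. This succeeds quickly unless $3\mid n$, in which case we switch to $n-25k$, $n-49k$, and so on. By the prime number theorem in progressions, the expected number of steps is $O(\log n)$, so the total cost is feasible with a segmented sieve storing primality bits. We record that the search terminates for every $n$ with $24<n\le 8\cdot 10^9$. Small $n$ near $24$ are checked by hand to confirm that $24$ itself fails (for example, $24-p$ is square-free for every prime $p<24$), which explains the threshold.

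The main obstacle is purely computational: guaranteeing correctness and feasibility at scale. That means a reliable primality source up to $8\cdot 10^9$ (a bit-packed segmented sieve of Eratosthenes) and the handling of rare worst cases where $n$ is divisible by many small primes, such as highly composite $n$ with $3\cdot5\cdot7\mid n$. For those $n$, $n-p\equiv -p \not\equiv 0$ modulo each such $q^2$ unless $p=q$, so we must fall back to larger $q$. I would first try bounding the number of squares needed empirically and logging the maximal $q$ used, expecting it to stay small.
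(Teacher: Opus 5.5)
Your proposal is correct, but it takes a genuinely different route from the paper.

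The paper's verification uses no arithmetic structure. For every $n$ in $(24,8\cdot 10^9]$ it first covers $n$ by $p+\bar s$, with precomputed non-square-free $\bar s\le 10^5$. It then tries $n-\bar s$ for $\bar s\le 10^4$, and finally falls back on brute force.

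You instead use congruences modulo prime squares, which settles most $n$ by hand:
\begin{itemize}
\item $p\in\{3,5\}$ gives $4\mid n-p$ for every odd $n\ge 7$.
\item $p=2$ handles $n\equiv 2\pmod 4$.
\item If $4\mid n$ but $3\nmid n$, a prime $p\le 19$ with $p\equiv n\pmod 9$ works. Your list needs $19$ for the class $1\bmod 9$.
\end{itemize}
So only multiples of $12$ need a search. Your first observation alone proves \autoref{thm:main3_odd} outright, so the analytic argument of Section~\ref{sec:main3_odd} is not needed.

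Your idea also makes the sweep up to $8\cdot 10^9$ unnecessary. Since $4\cdot 3\cdot 5\cdots 29>8\cdot 10^9$, every multiple of $4$ in the range has an odd prime $q\le 29$ with $q\nmid n$. Then $q^2\mid n-p$ with $n-p>0$ as soon as the least prime $p\equiv n\pmod{q^2}$ is below $n$. Hence it suffices to tabulate the least prime in each reduced class modulo $q^2$ for $3\le q\le 29$, and brute-force the remaining $n$ below the largest of these. This is an unconditional, computational counterpart of \autoref{lem:condition}.

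The paper's approach buys uniformity: there is nothing to argue, only code to run, at the cost of a large computation. Yours buys a short, transparent argument with only a tiny finite check.
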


\begin{proof}
For each sub-interval $[3,10^7)$ and $[\alpha\cdot 10^7,(\alpha+1)\cdot 10^7)$ with $\alpha\in[1,800]$, we verify the claim by computation as follows. To begin, we precompute two collections of small integers $m > 1$ that are not square-free: a set $\mathcal{S}_1$ containing integers $m \leq 10^5$ and an ordered list $\mathcal{S}_2$ containing integers $m \leq 10^4$. Next, we verify the bulk of integers in the interval in a covering step. That is, for a collection of primes $p$ in the interval, we mark as verified every integer of the form $p+\overline{s}$ with $\overline{s}\in\mathcal{S}_1$. This covering step greatly reduces the number of cases left to verify. For each $n$ that remains, we test whether $n-\overline{s}$ is prime for some $\overline{s}\in\mathcal{S}_2$, and only if this targeted search fails do we invoke a final brute-force check. The Python code implementing this procedure is available at \href{https://github.com/EthanSLee/On-the-sum-of-a-prime-and-a-square-free-number-with-divisibility-conditions/tree/main}{\texttt{this link}}.
\end{proof}

Next, we derive an explicit criterion which reduces the proofs of \autoref{thm:main3_suff_large}, \autoref{thm:main3_odd}, and \autoref{thm:main3} for large $n$ to the verification of one of the inequalities \eqref{eqn:equv_cond_C-1} or \eqref{eqn:equv_cond_C-2}, which are fully explained below. 
To this end, we introduce the weighted counting function
\begin{equation*}
    R(n) = \sum_{p\leq n} \mu^2(n-p) \log{p},
\end{equation*}
where the sum is over primes $p$, and $\mu$ denotes the M\"{o}bius function. 
Using Dirichlet convolution, we have $$\mu^2(n) = \sum_{a^2 \mid n} \mu(a).$$
From this observation, we can write 
\begin{equation*}
    R(n) = \sum\limits_{a\leq \sqrt{n}} \mu(a)\theta(n,a^{2},n) ,
    \quad\text{where}\quad 
    \theta(n,a^{2},n) = \sum_{\substack{p\leq n\\ p\equiv n\imod{a^2}}} \log{p} .
\end{equation*}
An integer $n$ can be written as the sum of a prime and an integer that is not square-free if and only if 
\begin{equation*}
    R(n) < \theta(n) := \sum_{p\leq n} \log{p} ,
\end{equation*}
which is equivalent to
\begin{equation}\label{eqn:equv_cond_B}
    \sum\limits_{1 < a \leq \sqrt{n}} \mu(a)\theta(n,a^{2},n) < 0 .
\end{equation}
Thus, to prove each of our main results for sufficiently large $n$, it suffices to verify \eqref{eqn:equv_cond_B}. 

The following result gives a practical condition that certifies when \eqref{eqn:equv_cond_B} holds for sufficiently large $n$. 
While this result appears technical, this is balanced by the fact that it leads to a clean, versatile, and verifiable criterion.

\begin{proposition}\label{prop:condition}
Let $0 < A < 1/2$, $c > 1$ be real parameters and 
\begin{equation*}
    P(n) = \prod_{p\nmid n} \Big(1 - \frac{1}{p(p-1)}\Big) .
\end{equation*}
For every integer $1 < a \leq c$ such that $(a,n) = 1$, suppose also that $\varphi$ is the Euler totient function, $E(a,n) = o(n)$ is a function satisfying
\begin{align}\label{required form}
    \left|\theta(n,a^2,n) - \frac{n}{\varphi(a^2)}\right| \leq E(a,n) \quad\text{for all}\quad n\geq x_a > 0 ,
\end{align} 
and $x_0 = \max_{1 < a \leq c} x_a$. We have that \eqref{eqn:equv_cond_B} is true for every $n\geq x_0$ such that 
\begin{equation}\label{eqn:equv_cond_C}
    1 > P(n)
    + n^{-1} \sum\limits_{\substack{1 < a \leq c \\ (a,n) = 1}} \mu^2(a) E(a,n)
    + \left(\frac{1+2A}{1-2A}\right) \frac{4}{c - 1} + \frac{3 \log{n}}{n^{A}} .
\end{equation}
\end{proposition}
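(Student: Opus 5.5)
We must show that $\sum_{1<a\le\sqrt n}\mu(a)\theta(n,a^2,n)<0$. The plan is to divide by $n$ and prove
\begin{equation*}
\sum_{1<a\le\sqrt n}\mu(a)\theta(n,a^2,n) \le n\Big(P(n)-1\Big) + \sum_{\substack{1<a\le c\\(a,n)=1}}\mu^2(a)E(a,n) + n\Big(\frac{1+2A}{1-2A}\Big)\frac{4}{c-1} + 3n^{1-A}\log n .
\end{equation*}
Condition \eqref{eqn:equv_cond_C} then makes the right-hand side negative.

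First discard the terms with $(a,n)>1$. If $p\equiv n \imod{a^2}$ and some prime $q\mid(a,n)$, then $q\mid p$, so $p=q$. These terms therefore contribute at most $\sum_{a\le\sqrt n}\log a\ll \sqrt n\log n$, which is absorbed into the final error term. Next split the remaining square-free $a$ coprime to $n$ into three ranges: $1<a\le c$, $c<a\le n^{A}$, and $n^{A}<a\le\sqrt n$.

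\textbf{Small range.} For $1<a\le c$, apply \eqref{required form} with $n\ge x_0$ to get $\mu(a)\theta(n,a^2,n)\le \mu(a)n/\varphi(a^2)+\mu^2(a)E(a,n)$. Since $\varphi(a^2)=a\varphi(a)$ and $\mu(a)/(a\varphi(a))$ is multiplicative, we have
\begin{equation*}
\sum_{\substack{a\ge1\\(a,n)=1}}\frac{\mu(a)}{a\varphi(a)}=\prod_{p\nmid n}\Big(1-\frac1{p(p-1)}\Big)=P(n).
\end{equation*}
The main terms therefore give $n(P(n)-1)$, up to the tail $n\sum_{a>c}1/(a\varphi(a))$. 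This is where $n\ge x_0$ and the bounded range $a\le c$ are used.

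\textbf{Middle range.} This range, together with the tail just mentioned, is the main obstacle. I would use the Brun--Titchmarsh inequality in the Montgomery--Vaughan form $\pi(x;q,b)\le 2x/(\varphi(q)\log(x/q))$. For $a\le n^{A}$ we have $\log(n/a^2)\ge(1-2A)\log n$. Multiplying by $\log p\le\log n$ gives
\begin{equation*}
\theta(n,a^2,n)\le \frac{2n}{(1-2A)\,a\varphi(a)} .
\end{equation*}
I then sum $1/(a\varphi(a))$ over $a>c$. Using $a\varphi(a)\ge a^2/2$-type bounds, or better $\sum_{a>c}1/(a\varphi(a))\le 2/(c-1)$-style estimates, should produce the factor $4/(c-1)$. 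The tail from the small range contributes at most $n\cdot 2/(c-1)$. This is presumably dominated in the combined constant $\frac{1+2A}{1-2A}\cdot\frac{4}{c-1}$, since $\frac{1+2A}{1-2A}=1+\frac{4A}{1-2A}$ merges the $1\cdot$tail piece with the $\frac{2}{1-2A}$ Brun--Titchmarsh piece. I expect some care to be needed to get exactly this constant, possibly via a summation-by-parts bound such as $\sum_{a>c}\frac{1}{a\varphi(a)}\le\frac{2}{c-1}$, which follows from $\varphi(a)\ge a/2$ failing only mildly. If this is too lossy, I would use $\sum_{a>c}\mu^2(a)/(a\varphi(a))$ with an explicit Dirichlet-series comparison.

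\textbf{Large range.} For $n^{A}<a\le\sqrt n$, use the trivial bound $\theta(n,a^2,n)\le\log n\,(n/a^2+1)$. Summing over $a>n^{A}$ gives at most $\log n\,(n^{1-A}+\sqrt n)+n\log n/n^{2A}\cdot O(1)$. Since $A<1/2$, this is at most $2n^{1-A}\log n$. Adding the $\sqrt n\log n$ from the non-coprime terms yields the $3n^{1-A}\log n$ term. Collecting the three ranges gives the displayed inequality, and hence \eqref{eqn:equv_cond_B}.
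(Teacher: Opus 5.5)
Your decomposition is the paper's: discard $(a,n)>1$, split at $c$ and $n^A$, use the hypothesis on $E(a,n)$, Brun--Titchmarsh and the trivial bound, and compare with $n(P(n)-1)$. However, your bookkeeping in the middle range $c<a\le n^A$ does not produce the constant $\frac{1+2A}{1-2A}$, and the ``merge'' you describe is an arithmetic error.

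You keep the main term $\mu(a)n/\varphi(a^2)$ only for $a\le c$. So you pay the tail $n\sum_{a>c}\mu^2(a)/\varphi(a^2)$ with coefficient $1$, and on top of that you bound the middle range by the one-sided estimate $\theta(n,a^2,n)\le\frac{2}{1-2A}\frac{n}{\varphi(a^2)}$. The resulting coefficient on $c<a\le n^A$ is $1+\frac{2}{1-2A}=\frac{1+2A}{1-2A}+2$. The correct identity is $\frac{1+2A}{1-2A}=\frac{2}{1-2A}-1$, i.e.\ the $1$ must be subtracted, not added. With the tail bound $4/(c-1)$, your argument therefore yields \eqref{eqn:equv_cond_C} only with $\big(\frac{1+2A}{1-2A}+2\big)\frac{4}{c-1}$, which is strictly weaker than the proposition. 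Even granting your hoped-for $2/(c-1)$, the constants match only for $A\ge 1/6$, not for all $0<A<1/2$.

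The missing step is to keep the main term in the middle range too. Write $\mu(a)\theta(n,a^2,n)=\mu(a)\frac{n}{\varphi(a^2)}+\mu(a)\big(\theta(n,a^2,n)-\frac{n}{\varphi(a^2)}\big)$ and use $-\frac{n}{\varphi(a^2)}\le\theta(n,a^2,n)-\frac{n}{\varphi(a^2)}\le\frac{1+2A}{1-2A}\frac{n}{\varphi(a^2)}$. The lower bound is just $\theta\ge0$, and $1\le\frac{1+2A}{1-2A}$ makes the bound two-sided. The main terms over all $1<a\le n^A$ then give $n(P(n)-1)$ minus a tail over $a>n^A$ with coefficient $1\le\frac{1+2A}{1-2A}$. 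Everything collapses to $\frac{1+2A}{1-2A}\,n\sum_{a>c}\mu^2(a)/\varphi(a^2)$, exactly as in the paper.

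Second, the explicit estimate $\sum_{a>c}\mu^2(a)/\varphi(a^2)\le 4/(c-1)$ for every real $c>1$ is an essential input, and you have not supplied it; the paper imports it from Ramar\'e's Lemma~3.10. ``$\varphi(a)\ge a/2$ failing only mildly'' is not an argument: $\varphi(a)<a/2$ for every multiple of $6$, and pointwise lower bounds for $\varphi$ lose a $\log\log c$ factor. A proof would go through something like $\mu^2(a)\frac{a}{\varphi(a)}=\mu^2(a)\sum_{d\mid a}\frac{\mu^2(d)}{\varphi(d)}$ followed by an explicit interchange of summation. Keep the factor $\mu^2(a)$ in your tail: without it the sum is about $1.94/c$, with it about $1/c$.

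A minor point: your claim that the large range is at most $2n^{1-A}\log n$ fails for small $A$. There $n^{1-2A}$ is comparable to $n^{1-A}$, and the $\sqrt n\log n$ term tips the total over. This is harmless, since $(n^{1-A}+n^{1-2A}+\tfrac32\sqrt n)\log n\le 3n^{1-A}\log n$ for $n>4$, which still gives the final error term.
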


\begin{proof}
We adapt the arguments in \cite{Dudek, FrancisLee, HathiJohnston} which have been used to derive lower bounds for $R(n)$. Full details of our proof of \autoref{prop:condition} are given in Section \ref{sec:prop:condition}.
\end{proof}

Next, we require the upper bounds for $P(n)$ presented in \autoref{prop:IntegralBounds} below to verify the condition \eqref{eqn:equv_cond_C} in \autoref{prop:condition} holds for sufficiently large $n$. 

\begin{proposition}\label{prop:IntegralBounds}
Let
\begin{equation}\label{eqn:Wn_def}
    W(n) 
    = \frac{1}{2.11 \log{n} \logb{2.11 \log{n}}} \left(1 - \frac{5}{2\logb{2.11 \log{n}}}\right) .
\end{equation}
If $n \geq 3$ is odd, then $P(n) \leq \tfrac{1}{2}$. 
Moreover, if $n \geq e^{28.05} \approx 1.52\cdot 10^{12}$, then
\begin{equation*}
    P(n) < 1 - W(n) + \frac{W(n)^2}{2} .
\end{equation*}
\end{proposition}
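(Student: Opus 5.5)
The first assertion is immediate. If $n$ is odd then $2\nmid n$, so the factor $1-\frac{1}{2\cdot 1}=\tfrac12$ occurs in $P(n)$. Every other factor lies in $(0,1)$, hence $P(n)\le \tfrac12$.

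For the second assertion I would pass through the exponential. Since $\log(1-u)\le -u$,
\begin{equation*}
P(n)\le \exp\Big(-\sum_{p\nmid n}\frac{1}{p(p-1)}\Big)\le \exp\Big(-\sum_{p\nmid n}\frac{1}{p^2}\Big).
\end{equation*}
It then suffices to show $S(n):=\sum_{p\nmid n}p^{-2}\ge W(n)$, because $e^{-W}\le 1-W+W^2/2$ for $W\ge 0$ (alternating Taylor series), and $W(n)>0$ in the stated range.

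To bound $S(n)$ from below, let $k=\omega(n)$ and let $p_k$ be the $k$-th prime. The sum $\sum_{p\nmid n}p^{-2}$ omits exactly $k$ primes. It is therefore smallest when the omitted primes are the $k$ smallest ones, which gives $S(n)\ge \sum_{p>p_k}p^{-2}$. Since $n$ is divisible by $k$ distinct primes, $p_k\#\le n$, i.e.\ $\theta(p_k)\le\log n$. I would then use an explicit Chebyshev lower bound of the shape $\theta(x)\ge x/2.11$ for $x$ beyond some small threshold (e.g.\ from Rosser--Schoenfeld) to deduce $p_k\le y:=2.11\log n$. The case where $p_k$ lies below that threshold is trivial, since then certainly $p_k\le y$ once $n\ge e^{28.05}$. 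Because the tail sum is decreasing in its starting point, $S(n)\ge\sum_{p>y}p^{-2}$ with $y=2.11\log n\ge 59$.

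The remaining step, and the one I expect to be the main obstacle, is the explicit tail estimate
\begin{equation*}
\sum_{p>y}\frac{1}{p^2}\ \ge\ \frac{1}{y\log y}\Big(1-\frac{5}{2\log y}\Big),\qquad y\ge 2.11\cdot 28.05 .
\end{equation*}
I would first write $\sum_{p>y}p^{-2}=\int_y^\infty \frac{d\theta(t)}{t^2\log t}$ and integrate by parts. The boundary term is $-\theta(y)/(y^2\log y)$, and the integral is $\int_y^\infty\theta(t)\frac{2\log t+1}{t^3\log^2 t}\,dt$. I would insert the Rosser--Schoenfeld bounds $\theta(t)\ge t(1-1/\log t)$ in the integral and $\theta(y)\le y(1+1/(2\log y))$ in the boundary term. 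Integrating the resulting elementary expressions, using $\int_y^\infty \frac{dt}{t^2\log t}\ge \frac{1}{y\log y}\big(1-\frac1{\log y}\big)$ and similar bounds, should collect the error terms into a loss of at most $\frac{5}{2\log y}$ relative to the main term $\frac{1}{y\log y}$. The constant $5/2$ has to absorb all of these contributions, and the threshold $n\ge e^{28.05}$ is presumably tuned so that this bookkeeping and the validity ranges of the $\theta$ bounds work out. I would check the final numeric inequality at the endpoint $y=2.11\cdot 28.05$ and argue monotonicity beyond it.
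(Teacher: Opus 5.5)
Your first assertion is fine, and so is your reduction to the tail estimate. That reduction is actually cleaner than the paper's. The paper bounds $q_{\omega(n)}<2.11\log n$ via Robin's inequality for $\omega(n)$ plus an upper bound for the $k$th prime, with a separate case $\omega(n)\le 2$. You instead use $\theta(p_k)\le\log n$ and $\theta(x)>x(1-1/\log x)$ for $x\ge 41$, which give $p_k\le y$ at once.

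The gap is in the step you flagged. With the two Rosser--Schoenfeld inputs you name, the constant $5/2$ cannot come out, whatever threshold you choose. Put $I_k=\int_y^\infty t^{-2}(\log t)^{-k}\,dt$, so that $I_k=\frac{1}{y(\log y)^k}-kI_{k+1}$. Inserting $\theta(t)\ge t(1-1/\log t)$ bounds your integral below by exactly $2I_1-I_2-I_3$. Adding the boundary term with $\theta(y)\le y(1+\frac{1}{2\log y})$, the best you can get is
\[
\sum_{p>y}\frac{1}{p^2}\ \ge\ \frac{1}{y\log y}\Big(1-\frac{7}{2\log y}+5\,y\log y\,I_3\Big).
\]
The loss of $\frac{7}{2\log y}$ comes from three sources:
\begin{itemize}
\item $\frac{1}{\log y}$ from the main term itself, since even $\theta(t)=t$ gives only $\frac{1}{y\log y}-I_2$;
\item $\frac{1}{2\log y}$ from the boundary term;
\item $\frac{2}{\log y}$ from the error $t/\log t$ in your lower bound for $\theta$, because the weight $\frac{2\log t+1}{t^2\log^2 t}$ has mass about $\frac{2}{y\log y}$.
\end{itemize}
The correction $5y\log y\,I_3$ is of order $(\log y)^{-2}$ and does not recover the missing $\frac{1}{\log y}$. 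At $y=2.11\cdot 28.05$ the bracket is about $0.325$, while $1-\frac{5}{2\log y}\approx 0.387$. For large $y$ you are comparing $1-\frac{7}{2\log y}$ with $1-\frac{5}{2\log y}$, so it fails there too. As written, $S(n)\ge W(n)$ is therefore not established.

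The paper avoids this loss by summing against $\pi$ instead of $\theta$. It uses $\sum_{p>y}p^{-2}=-\pi(y)y^{-2}+2\int_y^\infty\pi(t)t^{-3}\,dt$ together with $\frac{t}{2\log^2 t}<\pi(t)-\frac{t}{\log t}<\frac{3t}{2\log^2 t}$ for $t\ge 59$. That lower bound captures part of the secondary term of $\pi(t)$. As a result the boundary costs $\frac{3}{2\log y}$, and the main term together with the integrated error costs only $I_2\le\frac{1}{y\log^2 y}$, for a total of exactly $\frac{5}{2\log y}$. The range $t\ge 59$ is also what dictates $n\ge e^{28.05}$, since $2.11\cdot 28.05>59$.

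If you prefer to stay with $\theta$, you need the sharper bound $\theta(t)>t(1-\frac{1}{2\log t})$. The same computation then gives $\frac{1}{y\log y}\big(1-\frac{5}{2\log y}+\frac72\,y\log y\,I_3\big)$, which suffices. However, Rosser--Schoenfeld prove that bound only for $t\ge 563$, so the range $59\le y<563$ would need separate treatment.
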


\begin{proof}
Full details of our proof of \autoref{prop:IntegralBounds} are given in Section \ref{sec:prop:IntegralBounds}. 
\end{proof}

Now, we are in position to derive the criterion. To this end, assume the same notation as in \autoref{prop:condition}. 
It follows from \autoref{prop:IntegralBounds} that \eqref{eqn:equv_cond_B} holds for every $n \geq \max\{x_0, e^{28.05}\}$ such that
\begin{equation}\label{eqn:equv_cond_C-1}
    W(n) > \frac{W(n)^2}{2}
    + n^{-1} \sum\limits_{\substack{1 < a \leq c \\ (a,n) = 1}} \mu^2(a) E(a,n)
    + \left(\frac{1+2A}{1-2A}\right) \frac{4}{c - 1} + \frac{3 \log{n}}{n^{A}} .
\end{equation}
In addition, if $n\geq x_0$ is odd, then \eqref{eqn:equv_cond_B} holds whenever
\begin{equation}\label{eqn:equv_cond_C-2}
    \frac{1}{2} > n^{-1} \sum\limits_{\substack{1 < a \leq c \\ (a,n) = 1}} \mu^2(a) E(a,n)
    + \left(\frac{1+2A}{1-2A}\right) \frac{4}{c - 1} + \frac{3 \log{n}}{n^{A}} .
\end{equation}
Therefore, to prove each of our main results, it suffices to locate parameters $A$ and $c$, as well as an admissible definition for $E(a,n)$, such that \eqref{eqn:equv_cond_C-1} (respectively, \eqref{eqn:equv_cond_C-2} in the odd case) holds for all sufficiently large $n$. 


Finally, let $q_i$ denote the $i$th prime and $N\#$ be the product of all distinct primes $p \leq N$. In particular, we have $q_{20} = 71$ and $$q_{20}\# = 557\,940\,830\,126\,698\,960\,967\,415\,390.$$ 
We follow the strategy above to prove \autoref{thm:main3_suff_large} for large $n$, \autoref{thm:main3_odd} for odd $n > 8\cdot 10^9$, and \autoref{thm:main3} for $n \geq q_{20}\#$. All that remains is to prove \autoref{thm:main3} for $8\cdot 10^9 < n < q_{20}\#$, which is done by appealing to the latest conditional bounds for the least prime in an arithmetic progression in an elementary argument. 

Following this plan, we prove \autoref{thm:main3_suff_large} in Section \ref{sec:main3_sufflarge}, \autoref{thm:main3_odd} in Section \ref{sec:main3_odd}, and \autoref{thm:main3} in Section \ref{sec:main3}. Important features of each proof, including the reason we cannot extend the proof of \autoref{thm:main3_odd} to cover even $n$, are also noted as they arise.

\subsection{Proof of \autoref{thm:main3_suff_large}}\label{sec:main3_sufflarge}

The Siegel--Walfisz theorem \cite[Chap.~22,~Eqn.~(4)]{Davenport} implies, for sufficiently large $n$, that there exists a constant $\kappa > 0$ such that if $a\leq (\log{n})^2$, then we can assert
\begin{equation}\label{eqn:SiegelWalfisz}
    E(a,n) \ll n \exp\!\left(-\kappa\sqrt{\log{n}}\right) 
\end{equation} 
in \eqref{eqn:equv_cond_C-1}. 
Therefore, asserting $A = 1/4$, $c = (\log{n})^2$, and \eqref{eqn:SiegelWalfisz} in \eqref{eqn:equv_cond_C-1}, we see that \eqref{eqn:equv_cond_B} is true for sufficiently large $n$. 
The choice $A = 1/4$ is arbitrary. 
This completes our proof of \autoref{thm:main3_suff_large}. \qed

\begin{remark}
The Siegel--Walfisz theorem is ineffective due to the potential existence of a Siegel zero, so our proof of \autoref{thm:main3_suff_large} is also ineffective. 
Recall that ineffective means the implied constant exists but cannot be computed; for further details on the Siegel zero see \cite[Chap.~11.2]{MV}. 
To overcome this limitation, one would need to apply effective bounds for $E(a,n)$ instead of \eqref{eqn:SiegelWalfisz}. Without assuming additional hypotheses, the strongest available effective bounds for $E(a,n)$ have the weaker asymptotic form 
\begin{equation}\label{eqn:BennettEtAll}
    E(a,n) \ll \frac{n}{\log{n}} .
\end{equation}
In particular, explicit bounds of this form are applied below in our proof of \autoref{thm:main3_odd} (see \eqref{eqn:BennettEtAl}). 
However, if we had asserted the effective bound \eqref{eqn:BennettEtAll} in \eqref{eqn:equv_cond_C-1}, then we could not choose $c$ such that the condition \eqref{eqn:equv_cond_C-1} holds for any large $n$. 
To see this, note that if $c > \log{n}$, then
\begin{equation*}
    n^{-1} \sum\limits_{\substack{1 < a \leq c \\ (a,n) = 1}} \mu^2(a) E(a,n)
    \ll \frac{c}{\log{n}}
    \neq o(W(n)) ,
\end{equation*}
and if $c \leq \log{n}$, then
\begin{equation*}
    \frac{4}{c-1} > W(n) .
\end{equation*}
Thus, we must be content with the present ineffective proof of \autoref{thm:main3_suff_large}. Indeed, we believe this underscores the challenge of the problem at hand and enhances its mathematical interest. 
\end{remark}

\subsection{Proof of \autoref{thm:main3_odd}}\label{sec:main3_odd}

In light of \autoref{lem:comp_checks_nsf}, it suffices to prove the result for every odd $n > 8\cdot 10^9$. To this end, recall that Bennett \textit{et al.} \cite[Thm.~1.2]{BennettEtAl} tell us if $n\geq 8\cdot 10^9$, $3 \leq a^2 \leq 10^5$, and $(a,n) = 1$, then we can assert
\begin{equation}\label{eqn:BennettEtAl}
    E(a,n) \leq \frac{n}{160\log{n}} 
\end{equation}
in \eqref{eqn:equv_cond_C-2}. 
For every $1 < a \leq 316$, we have $3\leq a^2 \leq 10^5$. Therefore, assert $c = 316$ and \eqref{eqn:BennettEtAl} in \eqref{eqn:equv_cond_C-2} to see that \eqref{eqn:equv_cond_B} holds for every odd $n > 8\cdot 10^9$ such that
\begin{equation*}
    \frac{1}{2} > \frac{316}{160\log{n}} + \left(\frac{1+2A}{1-2A}\right) \frac{4}{315} + \frac{3 \log{n}}{n^{A}} .
\end{equation*}
This lower bound is minimised at $n = 8\cdot 10^9$ when $A = 0.34843$. Therefore, we also assert $A = 0.34843$ and note the resulting inequality holds for every $n \geq 8\cdot 10^9$. This completes our proof of \autoref{thm:main3_odd}. \qed

\begin{remark}
For a sufficiently large $N$, verifying \eqref{eqn:equv_cond_C-2} for odd $n\geq N$ is substantially simpler than verifying \eqref{eqn:equv_cond_C-1} for all $n\geq N$. This is natural, since when $n$ is odd we have a significantly stronger upper bound for the product $P(n)$, as defined in \autoref{prop:condition}. We exploit this phenomenon to prove \autoref{thm:main3_odd}, which settles all odd cases. However, for the reasons given in Section \ref{sec:main3_sufflarge}, no unconditional extension to even $n$ can be obtained by this approach.
\end{remark}

\subsection{Proof of \autoref{thm:main3}}\label{sec:main3}

Suppose the GRH for Dirichlet $L$-functions is true. In light of \autoref{lem:comp_checks_nsf}, it suffices to prove the result for every odd $n > 8\cdot 10^9$. 

Next, we prove the following lemma and apply it in an elementary argument to establish
\autoref{thm:main3} holds for every $8\cdot 10^9 < n < q_{20}\#$. We treat this intermediate range separately, because the analytic approach we use for $n \geq q_{20}\#$ only yields results when $n$ is large. 

\begin{lemma}\label{lem:condition}
If the GRH for Dirichlet $L$-functions is true and there exists a prime $q$ such that $n > 4(q(q-1)\log{q})^2$ and $(n,q^2) = 1$, then the inequality \eqref{eqn:equv_cond_B} is true. 
\end{lemma}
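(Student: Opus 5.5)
Under GRH, a prime $p \equiv n \pmod{q^2}$ with $p \le n$ and $p \neq n$ would give $n - p$ divisible by $q^2$, hence not square-free (note $n-p>0$). Such a $p$ forces $R(n) < \theta(n)$, which we showed is equivalent to \eqref{eqn:equv_cond_B}. The plan is therefore to produce, under GRH, a prime $p < n$ in the residue class $n \bmod q^2$. The hypothesis $(n,q^2)=1$ makes this class coprime to the modulus.

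The key input will be a conditional explicit bound for the least prime in an arithmetic progression. The paper hints at ``latest conditional bounds for the least prime in an arithmetic progression.'' I would invoke the GRH result of Lamzouri--Li--Soundararajan: for $k \ge 3$ and $(b,k)=1$, the least prime $p \equiv b \pmod k$ satisfies
\begin{equation*}
p \le (\varphi(k)\log k)^2 .
\end{equation*}
Apply this with $k = q^2$ and $b \equiv n$. Since $\varphi(q^2) = q(q-1)$ and $\log q^2 = 2\log q$, the least such prime satisfies
\begin{equation*}
p \le \bigl(2q(q-1)\log q\bigr)^2 = 4\bigl(q(q-1)\log q\bigr)^2 < n .
\end{equation*}
This explains exactly the shape of the hypothesis. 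The case $q=2$ gives $k=4\ge 3$, so the bound applies uniformly.

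It remains to check $p \neq n$, and this is automatic from $p < n$. Then $q^2 \mid n - p$ with $n-p \ge 1$, so $\mu^2(n-p)=0$. The term $\log p$ is missing from $R(n)$, giving $R(n) \le \theta(n) - \log p < \theta(n)$. By the equivalence derived before \autoref{prop:condition}, this is \eqref{eqn:equv_cond_B}.

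The main obstacle is only the choice of the precise conditional theorem and its hypotheses, such as the range of $k$ and any requirement like $k$ sufficiently large. If the cited bound had a slightly different constant, the constant $4$ in the hypothesis would have to be adjusted accordingly. Everything else is elementary.
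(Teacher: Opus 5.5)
Your proposal is correct and matches the paper's proof: both apply the Lamzouri--Li--Soundararajan GRH bound $(\varphi(k)\log k)^2$ for the least prime in a progression with modulus $k=q^2$, obtaining a prime $p\le 4(q(q-1)\log q)^2<n$ with $p\equiv n\imod{q^2}$, so that $q^2\mid n-p$ and \eqref{eqn:equv_cond_B} follows. You also check explicitly that $n-p\ge 1$ and that the missing $\log p$ term gives $R(n)<\theta(n)$; the paper leaves both points implicit.
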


\begin{proof}
If $(n,q^2) = 1$ and the GRH is true, then Lamzouri, Li, and Soundararajan \cite[Cor.~1.2]{LLS} tell us that the least prime $p\equiv n\imod{q^2}$ satisfies $$p\leq (2\varphi(q^2)\log{q})^2 = 4(q(q-1)\log{q})^2.$$ Therefore, if $n > 4(q(q-1)\log{q})^2$ and $(n,q^2) = 1$, then there exists a prime $p\leq 4(q(q-1)\log{q})^2$ such that $n \equiv p\imod{q^2}$. It follows that $q^2 \mid n - p$, so the inequality \eqref{eqn:equv_cond_B} is true.
\end{proof}

Observe that $8\cdot 10^{9} > 4(q(q-1)\log{q})^2$ for every prime $q\leq q_{20}$, and that for every integer $8\cdot 10^{9} \leq n < q_{20}\#$, there is at least one prime $q\leq q_{20}$ such that $q\nmid n$. 
Thus, there is at least one prime $q\leq q_{20}$ such that $(n,q^2) = 1$. Hence, \autoref{lem:condition} implies the inequality \eqref{eqn:equv_cond_B} is true for every $8\cdot 10^{9} \leq n < q_{20}\#$. 
With this, the result is proved for every $n < q_{20}\#$. 

Finally, recall from \cite[Thm.~2.1]{KeliherLee} or \cite[App.~A]{LeePNTPAPs} that if the GRH for Dirichlet $L$-functions is true, $(a,n)=1$, and $a^2\geq 3$, then
\begin{equation}\label{eqn:theta_GRH}
    E(a,n)
    \leq \left(\frac{(\log{n})^2}{8\pi} + \left(\frac{\log{n}}{\pi} + 4\right)\log{a} + 3.43\right) \sqrt{n} 
\end{equation}
is admissible for every $n\geq 2$ in \eqref{eqn:equv_cond_C-1}. Therefore, upon asserting $c = n^A$ and \eqref{eqn:theta_GRH} in \eqref{eqn:equv_cond_C-1}, we see that \eqref{eqn:equv_cond_B} holds for every $n \geq q_{20}\#$ such that
\begin{equation*}
    W(n) > \frac{W(n)^2}{2}
    + n^{A - \frac{1}{2}} \left(\frac{1 + 8A}{8\pi} (\log{n})^2 + 4 A \log{n} + 3.43\right)
    + \left(\frac{1+2A}{1-2A}\right) \frac{4}{n^A - 1} + \frac{3 \log{n}}{n^A} .
\end{equation*}
This lower bound is minimised at $n = q_{20}\#$ when $A = 0.2419$. Therefore, we also assert $A = 0.2419$ and note the resulting inequality holds for every $n \geq q_{20}\#$. 
With this, \autoref{thm:main3_odd} is proved for every $n > 24$. \qed

\section{Proof of \autoref{prop:condition}}\label{sec:prop:condition}

In this section we establish \autoref{prop:condition} by adapting the methods of \cite{Dudek, FrancisLee, HathiJohnston}. Although certain bounds below admit small refinements, we deliberately give slightly weaker inequalities in inconsequential terms to simplify the final expression. 

Recall the assumptions from the statement of \autoref{prop:condition} and let 
\begin{equation*}
     \Sigma_A(n) = \sum\limits_{\substack{1 < a \leq n^A\\(a,n)=1}} \mu(a)\theta(n,a^{2},n) .
\end{equation*}
First, observe that
\begin{align*}
    \Bigg| \sum\limits_{1 < a \leq n^{1/2}} \mu(a) \theta(n,a^{2},n) - \Sigma_A(n) \bigg|
    &\leq \Big(\sum_{\substack{a \leq n^{A}\\(a,n) > 1}} + \sum\limits_{n^A < a\leq n^{1/2}}\Big) \mu^2(a) \theta(n,a^{2},n) .
\end{align*}
If $(a,n)>1$, then $\theta(n,a^{2},n) \leq \log{n}$, so this bound becomes
\begin{align*}
    \Bigg| \sum\limits_{1 < a \leq n^{1/2}} \mu(a) \theta(n,a^{2},n) - \Sigma_A(n) \bigg|
    &\leq n^{A}\log{n} + \sum\limits_{n^A < a\leq n^{1/2}} \mu^2(a) \theta(n,a^{2},n) .
\end{align*}
Next, follow the arguments to bound $\Sigma_3$ in \cite{Dudek} to see 
\begin{equation*}
    \sum\limits_{n^A < a\leq n^{1/2}} \mu^2(a) \theta(n,a^{2},n)
    \leq \left(n^{1-2A} + n^{1 - A}\right) \log{n} .
\end{equation*}
It follows from these observations that
\begin{equation*}
    \Bigg| \sum\limits_{1 < a \leq \sqrt{n}} \mu(a)\theta(n,a^{2},n) - \Sigma_A(n) \Bigg| 
    \leq 3 n^{1-A}\log{n} .
\end{equation*}
Next, recall that a consequence of the Brun--Titchmarsh theorem (see \cite{MontgomeryVaughan}) is that if $a\leq n^A$ and $(a,n) = 1$, then 
\begin{equation}\label{eqn:BT_application}
    \left|\theta(n,a^2,n) - \frac{n}{\varphi(a^2)}\right| \leq \left(\frac{1+2A}{1-2A}\right) \frac{n}{\varphi(a^{2})} . 
\end{equation}
Apply \eqref{required form} when $1 < a \leq c$ and \eqref{eqn:BT_application} when $c < a \leq n^A$ in the definition of $\Sigma_A(n)$ to reveal that if $n\geq x_0$, then
\begin{equation*}
    \Bigg| \Sigma_A(n) - n \sum\limits_{\substack{1 < a \leq n^A\\(a,n)=1}} \frac{\mu(a)}{\varphi(a^2)} \Bigg| 
    \leq \sum\limits_{\substack{1 < a \leq c\\(a,n)=1}} \mu^2(a) E(a,n)
    + n \left(\frac{1+2A}{1-2A}\right) \sum\limits_{\substack{c < a \leq n^A\\(a,n)=1}} \frac{\mu^2(a)}{\varphi(a^2)} .
\end{equation*}
A straightforward consequence of this bound is that
\begin{equation*}
    \Bigg| \Sigma_A(n) - n \sum\limits_{\substack{a > 1\\(a,n)=1}} \frac{\mu(a)}{\varphi(a^2)} \Bigg| 
    \leq \sum\limits_{\substack{1 < a \leq c\\(a,n)=1}} \mu^2(a) E(a,n)
    + n \left(\frac{1+2A}{1-2A}\right) \sum\limits_{\substack{a > c\\(a,n)=1}} \frac{\mu^2(a)}{\varphi(a^2)} .
\end{equation*}
It follows from \cite[Lem.~3.10]{Ramare} that
\begin{equation*}
    \sum\limits_{a > c} \frac{\mu^2(a)}{\varphi(a^{2})} 
    \leq \frac{4}{c - 1} .
\end{equation*}
Combine this observation with the equality
\begin{equation*}
    \sum\limits_{\substack{a > 1\\(a,n)=1}} \frac{\mu(a)}{\varphi(a^2)}
    = P(n) - 1 
\end{equation*}
to reveal that if $n\geq x_0$, then
\begin{equation*}
\begin{split}
    \Bigg| \sum\limits_{1 < a \leq \sqrt{n}} \mu(a)\theta(n,a^{2},n) &- n(P(n) - 1) \Bigg| \\ 
    &\quad\leq \sum\limits_{\substack{1 < a \leq c\\(a,n)=1}} \mu^2(a) E(a,n)
    + \left(\frac{1+2A}{1-2A}\right) \frac{4 n}{c-1}
    + 3 n^{1-A}\log{n} .
\end{split}
\end{equation*}
It follows from this approximation that \eqref{eqn:equv_cond_B} is true for every $n \geq x_0$ such that \eqref{eqn:equv_cond_C} is true, which completes the proof of \autoref{prop:condition}. \qed

\section{Proof of \autoref{prop:IntegralBounds}}\label{sec:prop:IntegralBounds}

In this section, we prove \autoref{prop:IntegralBounds}. To begin, note that if $n \geq 3$ is odd, then $2\nmid n$, so $P(n) \leq 1/2$ follows naturally. 

All that remains is to prove the general upper bound, which requires a more involved argument. To this end, let $\omega(k)$ denote the number of distinct prime factors of $k$, $q_i$ denote the $i$th prime, and note that
\begin{equation}\label{eqn:the_engine}
    P(n)
    = \prod_{p\nmid n} \left( 1 - \frac{1}{p(p-1)}\right)
    \leq C_{\text{Artin}} \prod_{p \leq q_{\omega(n)}} \left( 1 + \frac{1}{p^2 - p -1}\right) ,
\end{equation}
where $$C_{\text{Artin}} = \prod_{p} \left( 1 - \frac{1}{p(p-1)}\right) = 0.3739558136\ldots$$ is Artin's constant. To complete our proof of \autoref{prop:IntegralBounds}, we prove the result when $\omega(n) > 2$ and $\omega(n) \leq 2$ independently.

\subsection{Case I} 
Suppose $\omega(n) > 2$. To bound the product in \eqref{eqn:the_engine}, we prove an explicit upper bound for $q_{\omega(n)}$ of the form $q_{\omega(n)} \ll \log{n}$ in the following lemma. 

\begin{lemma}\label{lem:Robin_consequence}
If $n \geq 55$ and $\omega(n) > 2$, then $q_{\omega(n)} < 2.11 \log{n}$.
\end{lemma}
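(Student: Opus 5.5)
We need to show that if $k=\omega(n)\ge 3$ and $n\ge 55$, then $q_k<2.11\log n$. The plan is to reduce to primorials and then use explicit bounds for Chebyshev's function $\theta(x)=\sum_{p\le x}\log p$.

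\emph{Reduction to primorials.} Since $n$ has $k$ distinct prime factors, $n\ge q_k\#$, so
\[
\log n\ge \log(q_k\#)=\theta(q_k).
\]
It therefore suffices to prove $q_k<2.11\,\theta(q_k)$ for every $k\ge 3$, i.e.\ $\theta(p)/p>1/2.11\approx 0.4739$ for every prime $p\ge 5$. The case $k\ge3$ forces $n\ge 30$; the hypothesis $n\ge55$ only matters through $\log n\ge\theta(q_k)$, which already holds.

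\emph{Small primes.} For $k=3$ we have $\theta(5)=\log 30\approx 3.401$, and $5/3.401\approx 1.470<2.11$. For $k=4$, $\theta(7)=\log 210\approx 5.347$, giving ratio $\approx 1.31$. In general, for small primes I will tabulate $\theta(p)/p$ directly over a finite range, say all primes $p\le 10^3$; the ratio stays comfortably above $0.5$ there. The smallest values occur at the very beginning, at $p=5$ ($\approx0.68$), and indeed $\theta(x)/x$ only drops below $1/2$ for $x<5$.

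\emph{Large primes.} For the infinite tail I will invoke an explicit Chebyshev-type bound. Rosser--Schoenfeld give $\theta(x)>x(1-1/\log x)$ for $x\ge 41$, which exceeds $0.4739x$ once $\log x>1.9$. Combining this with the finite check gives $q_k<2.11\log n$ for all $k\ge3$. This explains why the constant $2.11$ is far from tight at $k=3$: it is likely chosen to ensure $2.11\log n$ remains well above the worst case at the eventual $n$ where it is applied (or via Robin's bound, as the lemma's label suggests). Robin's explicit bound $\omega(n)\le 1.3841\log n/\log\log n$ together with $q_k<k(\log k+\log\log k)$ would be the alternative route. I prefer the $\theta$ route because it is cleaner.

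The main obstacle is just citing a clean explicit lower bound for $\theta(x)$ valid from a small threshold and covering the gap numerically. I don't expect any real difficulty.
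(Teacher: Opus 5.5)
Your argument is correct, and it takes a genuinely different and more direct route than the paper.

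\textbf{The paper's route.} The paper works in the variable $\omega(n)$. It applies Robin's bound $\omega(n)\le 1.3841\log n/\log\log n$. It then applies the Rosser--Schoenfeld estimate $q_k<k(\log k+\log\log k)$, which is valid for $k\ge 6$; the paper inflates it by the factor $\tfrac{10}{9}(1+\tfrac{1}{e})$ and checks the cases down to $k=3$ directly. Finally it does a monotonicity analysis of $\log(1.3841\log n/\log\log n)/\log\log n$ on $n\ge 55$. The constant $2.11$ is just $\tfrac{10}{9}(1+\tfrac{1}{e})\cdot 1.3841\approx 2.104$ rounded up.

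\textbf{Your route.} You go straight to the extremal configuration: $n\ge q_{\omega(n)}\#$, so $\log n\ge\theta(q_{\omega(n)})$. This reduces the lemma to $\theta(p)/p>1/2.11$ for primes $p\ge 5$. That follows from $\theta(x)>x(1-1/\log x)$ for $x\ge 41$ (Rosser--Schoenfeld (3.16)) together with a check of the ten primes $5\le p\le 37$; you do not need to tabulate up to $10^3$.

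\textbf{What each approach buys.} Your route bypasses Robin's theorem entirely, which is itself essentially this primorial argument combined with Chebyshev-type bounds. It never uses the hypothesis $n\ge 55$, only $\omega(n)\ge 3$. It also gives a much sharper conclusion. Since $\min_{p\ge 5}\theta(p)/p=\theta(5)/5=\log 30/5\approx 0.680$, you in fact get $q_{\omega(n)}<1.48\log n$. Using that in place of $2.11\log n$ would enlarge $W(n)$ in Proposition~\ref{prop:IntegralBounds}. The paper's route has the convenience of quoting bounds already stated in terms of $\omega(n)$ and $q_k$, but it compounds a loss at each step.

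\textbf{A minor inaccuracy.} Your side remark that $\theta(x)/x$ ``only drops below $1/2$ for $x<5$'' is false for real $x$. For example, $\theta(x)/x\to\log 30/7\approx 0.486$ as $x\to 7^-$, and similarly $\theta(x)/x\to\log 210/11\approx 0.486$ as $x\to 11^-$. Since you only ever evaluate $\theta$ at primes, this does not affect the proof.
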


\begin{proof}
Recall that if $n \geq 3$, then Robin \cite[Thm.~11]{Robin} tells us 
\begin{equation}\label{eqn:RI}
    \omega(n) \leq \frac{1.3841 \log{n}}{\log\log{n}} .
\end{equation}
If $\omega(n) \geq 6$, then \cite[(3.13)]{RosserSchoenfeld} implies
\begin{align*}
    q_{\omega(n)} 
    < \left(1 + \frac{\log\log{\omega(n)}}{\log{\omega(n)}}\right) \omega(n) \log{\omega(n)} 
    &< \left(1 + \frac{1}{e}\right) \omega(n) \log{\omega(n)} \\
    &< \frac{10}{9} \left(1 + \frac{1}{e}\right) \omega(n) \log{\omega(n)} . 
\end{align*}
It is straightforward to verify this inequality also holds in the extended range $\omega(n) > 2$. 
Hence, if $n\geq 55$ and $\omega(n) > 2$, then \eqref{eqn:RI} tells us
\begin{align*}
    q_{\omega(n)} 
    &< \frac{10}{9} \left(1 + \frac{1}{e}\right) \frac{1.3841 \log{n}}{\log\log{n}} \logb{\frac{1.3841 \log{n}}{\log\log{n}}} \\
    &< \frac{2.11 \logb{1.3841 \log{n} / \log\log{n}}}{\log\log{n}} \log{n} 
    < 2.11 \log{n} , 
\end{align*}
since $\logb{1.3841 \log{n}/\log\log{n}}/\log\log{n}$ decreases from $0.99790\dots$ to $0.73421\dots$ on the interval $$(54.12154, e^{43.0505\dots}]$$ and increases to one on $(e^{43.0505\dots},\infty)$.
\end{proof}

It follows from \eqref{eqn:the_engine} and \autoref{lem:Robin_consequence} that if $n\geq 55$, then
\begin{align*}
    P(n)
    \leq C_{\text{Artin}} \prod_{p \leq 2.11 \log{n}} \left( 1 + \frac{1}{p^2 - p -1}\right) 
    &= \prod_{p > 2.11 \log{n}} \left( 1 - \frac{1}{p(p-1)}\right) \\
    &< \prod_{p > 2.11 \log{n}} \left( 1 - \frac{1}{p^2}\right) .
\end{align*}
Now, if $y \geq 59$, then \cite[Thm.~1]{RosserSchoenfeld} tells us
\begin{equation}\label{eqn:pi_bounds}
    \frac{y}{2(\log{y})^2} < \pi(y) - \frac{y}{\log{y}} < \frac{3y}{2(\log{y})^2} ,
\end{equation}
where $\pi(y)$ is the prime-counting function. 
It follows from partial summation and integration by parts that
\begin{align*}
    \sum_{p > y} \frac{1}{p^{2}}
    &= - \frac{\pi(y)}{y^{2}} + 2 \int_{y}^{\infty} \frac{\pi(t)}{t^{3}}\,dt \\
    &= - \frac{1}{y \log{y}} 
    + 2 \int_{y}^{\infty} \frac{dt}{t^{2}\log{t}}
    - \frac{\pi(y) - \tfrac{y}{\log{y}}}{y^{2}} 
    + 2 \int_{y}^{\infty} \frac{\pi(t) - \tfrac{t}{\log{t}}}{t^{3}}\,dt \\
    &= \frac{1}{y \log{y}} 
    - 2 \int_{y}^{\infty} \frac{dt}{t^{2}(\log{t})^2}
    - \frac{\pi(y) - \tfrac{y}{\log{y}}}{y^{2}} 
    + 2 \int_{y}^{\infty} \frac{\pi(t) - \tfrac{t}{\log{t}}}{t^{3}}\,dt .
\end{align*}
Therefore, it follows from \eqref{eqn:pi_bounds} that if $y \geq 59$, then 
\begin{align*}
    \sum_{p > y} \frac{1}{p^{2}}
    &> \frac{1}{y \log{y}} 
    - \frac{3/2}{y (\log{y})^2}
    - \int_{y}^{\infty} \frac{dt}{t^{2} (\log{t})^2} \\
    &\geq \frac{1}{y \log{y}} \left( 1 - \frac{5}{2\log{y}} \right) .
\end{align*}
With this, we have proved that if $y \geq 59$, then 
\begin{align*}
    \prod_{p > y} \left( 1 - \frac{1}{p^2}\right)
    &\leq \exp\left( - \sum_{p > y} \frac{1}{p^{2}}\right) 
    \leq \exp\left( - \frac{1}{y \log{y}} \left( 1 - \frac{5}{2\log{y}} \right) \right) .
\end{align*}
Let $y = 2.11 \log{n}$, then $y \geq 59$ for all $n \geq e^{28.05}$, so we restrict our attention to such $n$. It follows from this and $e^{-x} \leq 1 - x + x^2 / 2$, which holds for any $x > 0$, that
\begin{align*}
    P(n)
    &< e^{- W(n)}
    \leq 1 - W(n) + \frac{W(n)^2}{2} ,
\end{align*}
where $W(n)$ has been defined in \eqref{eqn:Wn_def}, which is the desired upper bound. 

\subsection{Case II} 
Suppose $\omega(n) \leq 2$. In this case, \eqref{eqn:the_engine} implies 
\begin{align*}
    P(n)
    &\leq C_{\text{Artin}} \prod_{p \leq 3} \left( 1 + \frac{1}{p^2 - p -1}\right) 
    = \frac{12}{5} C_{\text{Artin}} = 0.89749\dots ,
\end{align*}
which is stronger than the desired upper bound. \qed

\bibliographystyle{amsplain}
\bibliography{refs}
\end{document}